\newtheorem{theorem}{Theorem}[section]
\theoremstyle{remark}
\theoremstyle{definition}
\newtheorem{definition}[theorem]{Definition}
\newtheorem{lemma}[theorem]{Lemma}
\def\qed{\nopagebreak\hfill{\rule{4pt}{7pt}}
\medbreak}
\numberwithin{equation}{section}
\def\qed{\nopagebreak\hfill{\rule{4pt}{7pt}}
\medbreak}
\newlength{\boxedparwidth}
\hline \end{tabular} \end{center}}
\begin{document}

\begin{center}
{\Large \bf Separable integer partition classes and partitions with congruence conditions}
\end{center}

\begin{center}
 {Thomas Y. He}$^{1}$, {C.S. Huang}$^{2}$,
  {H.X. Li}$^{3}$ and {X. Zhang}$^{4}$ \vskip 2mm

$^{1,2,3,4}$ School of Mathematical Sciences, Sichuan Normal University, Chengdu 610066, P.R. China

   \vskip 2mm

  $^1$heyao@sicnu.edu.cn,  $^2$huangchushu@stu.sicnu.edu.cn,  $^3$lihaixia@stu.sicnu.edu.cn, $^4$zhangxi@stu.sicnu.edu.cn
\end{center}
\vskip 6mm   {\noindent \bf Abstract.} In this article, we first investigate the partitions whose parts are congruent to $a$ or $b$ modulo $k$ with the aid of separable integer partition classes with modulus $k$ introduced by Andrews. Then, we introduce the $(k,r)$-overpartitions in which only parts equivalent to $r$ modulo $k$ may be overlined and we will show that the number of $(k,k)$-overpartitions of $n$ equals the number of partitions of $n$ such that the $k$-th occurrence of a part may be overlined. Finally, we extend separable integer partition classes with modulus $k$ to overpartitions and then give the generating function for $(k,r)$-modulo overpartitions, which are the $(k,r)$-overpartitions satisfying certain congruence conditions.

\section{Introduction}

A partition $\pi$ of a positive integer $n$ is a finite non-increasing sequence of positive integers $\pi=(\pi_1,\pi_2,\ldots,\pi_m)$ such that $\pi_1+\pi_2+\cdots+\pi_m=n$. The $\pi_i$ are called the parts of $\pi$. Let $\ell(\pi)$ be the number of parts of $\pi$. The weight of $\pi$ is the sum of parts, denoted $|\pi|$.

Throughout this article, we let $k$ be a positive integer. We assume that $a$ and $b$ are integers such that $1\leq a<b\leq k$. We use $\ell_{a}(\pi)$ and $\ell_{b}(\pi)$ to denote the number of parts equivalent to $a$ and $b$ modulo $k$ in a partition $\pi$ respectively.

We define $(a,b,k)$-partition to be the partition with parts equivalent to $a$ or $b$ modulo $k$. Then, $(1,2,3)$-partitions are $3$-regular partitions, in which none of the parts is a multiple of $3$, and $(1,4,5)$-partitions and $(2,3,5)$-partitions are the partitions with congruence conditions in the first and the second Rogers-Ramanujan identities \cite{Ramanujan-1914,Rogers-1894} respectively.

Let $\mathcal{P}_{a,b,k}$ be the set of $(a,b,k)$-partitions. The generating function for the partitions in $\mathcal{P}_{a,b,k}$ is
\[\sum_{\pi\in\mathcal{P}_{a,b,k}}\mu^{\ell_{a}(\pi)}\nu^{\ell_{b}(\pi)}q^{|\pi|}=\frac{1}{(\mu q^a;q^k)_\infty(\nu q^b;q^k)_\infty}.\]
Here and in the sequel, we assume that $|q|<1$ and employ the standard notation \cite{Andrews-1976}:
\[(a;q)_\infty=\prod_{i=0}^{\infty}(1-aq^i) \quad\text{and}\quad (a;q)_n=\frac{(a;q)_\infty}{(aq^n;q)_\infty}.\]

One of the objectives of this article is to give a new generating function for the partitions in $\mathcal{P}_{a,b,k}$ given below with the aid of separable integer partition classes with modulus $k$ introduced by Andrews \cite{Andrews-2022}.
\begin{equation}\label{a-b-gen}
\begin{split}
&\sum_{\pi\in\mathcal{P}_{a,b,k}}\mu^{\ell_{a}(\pi)}\nu^{\ell_{b}(\pi)}q^{|\pi|}\\
&=\sum_{m,h,i\geq 0}\frac{\mu^{m-h-i}\nu^{h+i}q^{ma+kh^2+(b-a)(h+i)}}{(q^k;q^k)_m}{{h+i}\brack{h}}_k{{m-h-i}\brack{h}}_k,
\end{split}
\end{equation}
where ${A\brack B}_k$ is the $q$-binomial coefficient, or Gaussian polynomial for non-negative integers $A$ and $B$ defined as follows:
\[{A\brack B}_k=\left\{\begin{array}{ll}\frac{(q^k;q^k)_A}{(q^k;q^k)_B(q^k;q^k)_{A-B}},&\text{if }A\geq B\geq 0,\\
0,&\text{otherwise.}
\end{array}
\right.\]

In this article, we also investigate the overpartitions with certain congruence conditions.
An overpartition, introduced by Corteel and Lovejoy \cite{Corteel-Lovejoy-2004},  is a partition such that the first occurrence of a part can be overlined. We use $\ell_o(\pi)$ to denote the number of overlined parts in an overpartition $\pi$. In the remaining of this article, we assume that $r$ is an integer such that $k\geq r\geq 1$. An overpartition is called a $(k,r)$-overpartition if only parts equivalent to $r$ modulo $k$ may be overlined. Then, $(1,1)$-overpartitions are overpartitions. For example, there are fifteen $(3,3)$-overpartitions of $6$.
\[(6),(5,1),(4,2),(4,1,1),(3,3),(3,2,1),(3,1,1,1),\]
\[(2,2,2),(2,2,1,1),(2,1,1,1,1),(1,1,1,1,1,1),\]
\[(\bar{6}),(\bar{3},3),(\bar{3},2,1),(\bar{3},1,1,1).\]
Let $\mathcal{O}_{k,r}$ be the set of all $(k,r)$-overpartitions. The generating function for the overpartitions in $\mathcal{O}_{k,r}$ is
\begin{equation}\label{gen-o-kr-eqn}
\sum_{\pi\in\mathcal{O}_{k,r}}x^{\ell_o(\pi)}z^{\ell(\pi)}q^{|\pi|}=\frac{(-xzq^r;q^k)_\infty}{(zq;q)_\infty}.
\end{equation}

A $k$-partition is a partition such that the $k$-th occurrence of a part can be overlined. Then, $1$-partitions are overpartitions. For example, there are fifteen $3$-partitions of $6$.
\[(6),(5,1),(4,2),(4,1,1),(3,3),(3,2,1),(3,1,1,1),\]
\[(2,2,2),(2,2,1,1),(2,1,1,1,1),(1,1,1,1,1,1),\]
\[(3,1,1,\bar{1}),(2,2,\bar{2}),(2,1,1,\bar{1},1),(1,1,\bar{1},1,1,1).\]

Let ${O}_{k,k}(\ell_o,m,n)$ (resp. $P_{k}(\ell_o,m,n)$) denote the number of  $(k,k)$-overpartitions (resp. $k$-partitions) of $n$ with exactly $\ell_o$ overlined parts and $m$ parts. We will show the following theorem.
\begin{theorem}\label{o-p-relation}
For $\ell_o,m,n\geq 0,$ we have
\begin{equation}\label{eqn-o-p}
{O}_{k,k}(\ell_o,m,n)=P_{k}(\ell_o,m+(k-1)\ell_o,n).
\end{equation}
\end{theorem}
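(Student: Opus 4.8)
The plan is to prove \eqref{eqn-o-p} by constructing an explicit weight-preserving bijection $\phi$ from $(k,k)$-overpartitions to $k$-partitions that carries $\ell_o$ overlined parts to $\ell_o$ overlined parts while increasing the number of parts from $m$ to $m+(k-1)\ell_o$, which is exactly the content of the theorem. First I would record the structure of the two families through part-multiplicities. In a $(k,k)$-overpartition only multiples of $k$ may be overlined and, since it is an overpartition, at most one copy of each value is overlined; hence its overlined parts form a set of \emph{distinct} multiples of $k$. In a $k$-partition, for each value $v$ the $k$-th occurrence may be overlined, so a value is overlined at most once and only when its multiplicity is at least $k$.

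The map $\phi$ is as follows. Given a $(k,k)$-overpartition $\pi$, each overlined part has the form $\overline{kv}$ for some $v\ge 1$; delete this part and adjoin instead $k$ copies of $v$. After doing this for every overlined part, the value $v$ occurs at least $k$ times, and I overline its $k$-th occurrence. Because the overlined parts of $\pi$ are distinct multiples of $k$, the base values $v$ are distinct, so each $v$ receives exactly one batch of $k$ new copies and acquires exactly one overlined ($k$-th) occurrence; thus $\phi(\pi)$ is a legitimate $k$-partition. Replacing a single part $\overline{kv}$ of weight $kv$ by $k$ copies of $v$ preserves the weight and the number of overlined parts while turning $1$ part into $k$ parts, so applying this to all $\ell_o$ overlined parts changes the part count from $m$ to $(m-\ell_o)+k\ell_o=m+(k-1)\ell_o$.

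For the inverse, given a $k$-partition $\lambda$, for each value $v$ whose $k$-th occurrence is overlined I remove $k$ copies of $v$ and insert one overlined part $\overline{kv}$. Since these values $v$ are distinct, the new overlined parts are distinct multiples of $k$, so the output is a $(k,k)$-overpartition. As both $\phi$ and this map only ever add or remove $k$ copies at the overlined values and leave all other multiplicities untouched, they are mutually inverse, which establishes the bijection and hence \eqref{eqn-o-p}. As a cross-check one can alternatively derive the generating function $\tfrac{(-xz^{k}q^{k};q^{k})_\infty}{(zq;q)_\infty}$ for $k$-partitions, weighted by $x^{\ell_o}z^{\ell}q^{|\pi|}$, and compare it with \eqref{gen-o-kr-eqn} at $r=k$ via the substitution $x\mapsto xz^{-(k-1)}$.

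The one point that needs care, and which I expect to be the main obstacle to stating cleanly, is well-definedness when the base value $v$ of an overlined part $\overline{kv}$ is itself a multiple of $k$ or coincides with the value of other parts: then the $k$ adjoined copies must merge with pre-existing copies of $v$, and one must confirm that precisely one occurrence of each affected value ends up overlined. Phrasing the whole argument in terms of multiplicities (rather than the written non-increasing sequence) makes this merging unambiguous and shows that the overlining of the $k$-th occurrence is uniquely determined, so no ordering ambiguity remains.
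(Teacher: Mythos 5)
Your proposal is correct and is essentially the paper's own combinatorial proof: the paper likewise replaces each overlined part $\overline{k\zeta_i}$ by $k-1$ copies of $\zeta_i$ together with an overlined $\overline{\zeta_i}$ placed as the $k$-th occurrence, and inverts the map in the same way, which is exactly your multiset description. Your generating-function cross-check via $x\mapsto xz^{-(k-1)}$ is also precisely the paper's analytic proof.
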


For example, it can be checked that
\[{O}_{3,3}(1,m,6)=P_{k}(1,m+2,6)=1\text{ for }1\leq m\leq 4.\]

Then, we focus on a subset of $\mathcal{O}_{k,r}$. Let $\pi=(\pi_1,\pi_2,\ldots,\pi_m)$ be an overpartition in $\mathcal{O}_{k,r}$. For $1\leq i\leq m$, we define
\[\varphi_{k,r}(\pi_i)=s,\text{ where }-k+r+1\leq s\leq r \text{ and }\pi_i\equiv s\pmod{k}.\]
Now, we give the definition of $(k,r)$-modulo overpartitions.
\begin{definition}\label{defi_kr_modulo}
A $(k,r)$-modulo overpartition $\pi$ is a $(k,r)$-overpartition of the form $\pi=(\pi_1,\pi_2,\ldots,\pi_m)$ satisfying the following conditions:
\begin{itemize}
\item[\rm(1)] $\pi_m\equiv 1,2,\ldots,r\pmod{k}$;

\item[\rm(2)] for $1\leq i<m$, if $\varphi_{k,r}(\pi_i)<\varphi_{k,r}(\pi_{i+1})$, then $\pi_{i+1}$ is overlined.
\end{itemize}
\end{definition}

For example, there are eleven $(3,1)$-modulo overpartitions of $6$.
\[(5,\bar{1}),(4,1,1),(\bar{4},1,1),(4,\bar{1},1),(\bar{4},\bar{1},1),(3,2,\bar{1}),(3,\bar{1},1,1),\]
\[(2,2,\bar{1},1),(2,\bar{1},1,1,1),(1,1,1,1,1,1),(\bar{1},1,1,1,1,1).\]

Let $\mathcal{M}_{k,r}$ be the set of all $(k,r)$-modulo overpartitions. We will give the following generating function for the overpartitions in $\mathcal{M}_{k,r}$ with the aid of separable overpartition classes with modulus $k$, which is an extension of separable integer partition classes with modulus $k$ introduced by Andrews \cite{Andrews-2022}.
\begin{equation}\label{gen-m-kr-eqn}
\sum_{\pi\in\mathcal{M}_{k,r}}x^{\ell_o(\pi)}z^{\ell(\pi)}q^{|\pi|}=\sum_{n,j\geq0}\frac{x^jz^{n+j}}{(q^k;q^k)_{n+j}}q^{n+k{j\choose 2}+rj}{{n+kj+r-1}\brack{kj+r-1}}_1.
\end{equation}

This article is organized as follows. In Section 2, we recall the definition of separable integer partition classes with modulus $k$ introduced by Andrews \cite{Andrews-2022} and give a proof of \eqref{a-b-gen}.
Section 3 is devoted to presenting two proofs of Theorem \ref{o-p-relation}, an analytic proof and a combinatorial proof. Finally, we introduce separable overpartition classes with modulus $k$ and give a proof of \eqref{gen-m-kr-eqn} in Section 4.

\section{$(a,b,k)$-partitions}

In this section, we first recall the definition of separable integer partition classes with modulus $k$ introduced by Andrews \cite{Andrews-2022}. Then, we show that $\mathcal{P}_{a,b,k}$ is a separable integer partition class with modulus $k$ and give a equivalent statement of \eqref{a-b-gen}, which is given in \eqref{a-b-g-gen}. Finally, we give a proof of \eqref{a-b-g-gen}.

Andrews \cite{Andrews-2022} first introduced separable integer partition classes with modulus $k$ and analyzed some well-known theorems, such as the first G\"ollnitz-Gordon identity, Schur's partition theorem, partitions  with $n$ copies of $n$, and so on. Based on separable integer partition classes with modulus $k$, Passary \cite[Section 3]{Passary-2019} studied two cases of partitions with parts separated by parity, little G\"ollnitz identities and the second G\"ollnitz-Gordon identity, and Chen, He, Tang and Wei  \cite[Section 3]{Chen-He-Tang-Wei-2024} investigated the remaining six cases of partitions with parts separated by parity.

 \begin{definition}
A separable integer partition class $\mathcal{P}$ with modulus $k$ is a subset of all the partitions satisfying the following{\rm:}

There is a subset $\mathcal{B}$ of $\mathcal{P}$ {\rm(}$\mathcal{B}$ is called the basis of $\mathcal{P}${\rm)} such that for each integer $m\geq 1$, the number of partitions in $\mathcal{B}$ with $m$ parts is finite and every partition in $\mathcal{P}$ with $m$ parts is uniquely of the form
\begin{equation}\label{ordinary-form-1}
(\lambda_1+\mu_1)+(\lambda_2+\mu_2)+\cdots+(\lambda_m+\mu_m),
\end{equation}
where $(\lambda_1,\lambda_2,\ldots,\lambda_m)$ is a partition in $\mathcal{B}$ and $(\mu_1,\mu_2,\ldots,\mu_m)$ is a  non-increasing sequence of nonnegative integers, whose only restriction is that each part is divisible by $k$. Furthermore, all partitions of the form \eqref{ordinary-form-1} are in $\mathcal{P}$.
\end{definition}

For $m\geq 1$, let $\mathcal{B}_{a,b,k}(m)$ be the set of partitions in $\mathcal{P}_{a,b,k}$ with $m$ parts of the form $\lambda=(\lambda_{1},\lambda_{2},\ldots \lambda_{m})$ satisfying the following conditions:
\begin{itemize}
\item[(1)] $\lambda_{m}=a$ or $b$;
\item[(2)]  for $1\leq i<m$, $\lambda_{i}<\lambda_{i+1}+k$.
\end{itemize}

 For $m\geq 1$, assume that $\lambda=(\lambda_{1},\lambda_{2},\ldots \lambda_{m})$ is a partition in $\mathcal{B}_{a,b,k}(m)$. For $1\leq i<m$, if $\lambda_{i+1}=kh+a$, then we have $kh+a\leq \lambda_i<k(h+1)+a$, and so $\lambda_i=kh+a$ or $kh+b$; if $\lambda_{i+1}=kh+b$, then we have $kh+b\leq \lambda_i<k(h+1)+b$, and so $\lambda_i=kh+b$ or $k(h+1)+a$. Therefore, we see that there are $2^m$ partitions in $\mathcal{B}_{a,b,k}(m)$.

For example, the number of partitions in $\mathcal{B}_{a,b,k}(3)$ is $2^3=8$.
\[(a,a,a),(b,a,a),(b,b,a),(k+a,b,a),\]
\[(b,b,b),(k+a,b,b),(k+a,k+a,b),(k+b,k+a,b).\]

Set
\[\mathcal{B}_{a,b,k}=\bigcup_{m\geq1}\mathcal{B}_{a,b,k}(m).\]
 Obviously, $\mathcal{B}_{a,b,k}$ is the basis of $\mathcal{P}_{a,b,k}$. So, we have
 \begin{theorem}
$\mathcal{P}_{a,b,k}$ is a separable integer partition class with modulus $k$.
\end{theorem}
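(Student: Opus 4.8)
The plan is to verify directly that $\mathcal{B}_{a,b,k}$ meets the three requirements in the definition of a separable integer partition class with modulus $k$. The finiteness requirement is already secured, since we have seen that $|\mathcal{B}_{a,b,k}(m)|=2^m<\infty$ for every $m\geq 1$. What remains is to establish (i) that each $\pi\in\mathcal{P}_{a,b,k}$ with $m$ parts admits a \emph{unique} representation $\pi=\lambda+\mu$ with $\lambda\in\mathcal{B}_{a,b,k}(m)$ and $\mu=(\mu_1,\mu_2,\ldots,\mu_m)$ a non-increasing sequence of nonnegative multiples of $k$, and (ii) that conversely every such $\lambda+\mu$ lies in $\mathcal{P}_{a,b,k}$.

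For the existence half of (i), I will build $\lambda$ from $\pi=(\pi_1,\pi_2,\ldots,\pi_m)$ recursively, starting from the smallest part. Since each $\pi_i$ is congruent to $a$ or $b$ modulo $k$, I set $\lambda_m$ to be the least positive representative ($a$ or $b$) of the residue of $\pi_m$; then, for $i$ decreasing from $m-1$ to $1$, I take $\lambda_i$ to be the unique integer in the window $[\lambda_{i+1},\lambda_{i+1}+k)$ congruent to $\pi_i$ modulo $k$. By construction $\lambda_m\in\{a,b\}$, $\lambda_{i+1}\leq\lambda_i<\lambda_{i+1}+k$, and $\lambda_i\equiv a$ or $b\pmod{k}$, so $\lambda\in\mathcal{B}_{a,b,k}(m)$; I then define $\mu_i=\pi_i-\lambda_i$, which is automatically a multiple of $k$ because $\lambda_i\equiv\pi_i$.

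The crux of the argument, and the step I expect to be the main obstacle, is checking that this $\mu$ is an admissible partner, i.e.\ that every $\mu_i\geq 0$ and that $\mu_1\geq\mu_2\geq\cdots\geq\mu_m$. Both follow from one elementary observation: $\mu_i-\mu_{i+1}=(\pi_i-\pi_{i+1})-(\lambda_i-\lambda_{i+1})$ is a multiple of $k$ (being a difference of multiples of $k$), while $\pi_i-\pi_{i+1}\geq 0$ since $\pi$ is a partition and $0\leq\lambda_i-\lambda_{i+1}<k$ by the choice of window. Hence $\mu_i-\mu_{i+1}>-k$ is a multiple of $k$, which forces $\mu_i-\mu_{i+1}\geq 0$; combined with $\mu_m=\pi_m-\lambda_m\geq 0$, an induction from the bottom gives $\mu_i\geq 0$ for all $i$. (Equivalently, $\lambda_i\leq\pi_i$ because $\pi_i\geq\pi_{i+1}\geq\lambda_{i+1}$ and $\pi_i\equiv\lambda_i$, so $\pi_i$ is at least the smallest element of the window sharing its residue, namely $\lambda_i$.)

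For uniqueness I will note that the recursive choice above is in fact forced: any admissible $\lambda$ must satisfy $\lambda_m\in\{a,b\}$ with $\lambda_m\equiv\pi_m$, which pins down $\lambda_m$, and then the basis conditions $\lambda_{i+1}\leq\lambda_i<\lambda_{i+1}+k$ together with $\lambda_i\equiv\pi_i\pmod{k}$ pin down each $\lambda_i$ in turn. Finally, for (ii), given any $\lambda\in\mathcal{B}_{a,b,k}(m)$ and any admissible $\mu$, the sum $\pi=\lambda+\mu$ is non-increasing with positive parts and satisfies $\pi_i\equiv\lambda_i\equiv a$ or $b\pmod{k}$, so $\pi\in\mathcal{P}_{a,b,k}$. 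Assembling these verifications shows that $\mathcal{B}_{a,b,k}$ is a basis, and therefore that $\mathcal{P}_{a,b,k}$ is a separable integer partition class with modulus $k$.
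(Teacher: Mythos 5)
Your proof is correct and takes essentially the same approach as the paper: you work with the identical basis $\mathcal{B}_{a,b,k}$ (including the same $2^m$ count for the elements with $m$ parts). The only difference is that the paper declares the basis property ``obvious,'' whereas you spell out the residue-window construction of $\lambda$, the divisibility-plus-bound argument forcing $\mu$ to be non-increasing and nonnegative, and the forced-choice uniqueness; all of these details are sound.
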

For $m\geq 1$, we define \[g_{a,b,k}(m)=\sum_{\lambda\in\mathcal{B}_{a,b,k}(m)}\mu^{\ell_{a}(\lambda)}\nu^{\ell_{b}(\lambda)}q^{|\lambda|}.\]
Then, we have
\[\sum_{\pi\in\mathcal{P}_{a,b,k}}\mu^{\ell_{a}(\pi)}\nu^{\ell_{b}(\pi)}q^{|\pi|}=1+\sum_{m\geq 1}\frac{g_{a,b,k}(m)}{(q^k;q^k)_m}.\]
We find that \eqref{a-b-gen} is equivalent to the following identity.
\begin{equation}\label{a-b-g-gen}
g_{a,b,k}(m)=\sum_{h,i\geq 0}\mu^{m-h-i}\nu^{h+i}q^{ma+kh^2+(b-a)(h+i)}{{h+i}\brack{h}}_k{{m-h-i}\brack{h}}_k.
\end{equation}
In order to show \eqref{a-b-g-gen}, we need to give the generating functions for the partitions in $\mathcal{B}_{a,b,k}(m)$ with the largest part $l$, denoted ${g}_{a,b,k}(m,l)$.

\begin{lemma}\label{a-b-m-h-lem}
For $m\geq 1$, we have
\begin{equation}\label{eqn-h-1}
g_{a,b,k}(m,a)=\mu^{m}q^{ma}.
\end{equation}
For $m\geq 1$ and $h\geq 1$, we have
\begin{equation}\label{eqn-h-2}
\begin{split}
&g_{a,b,k}(m,kh+a)\\
&=\sum_{i\geq 0}\mu^{m-h-i}\nu^{h+i}q^{ma+kh^2+(b-a)(h+i)}{{h+i-1}\brack{h-1}}_k{{m-h-i}\brack{h}}_k.
\end{split}
\end{equation}
For $m\geq 1$ and $h\geq 0$, we have
\begin{equation}\label{eqn-h-3}
\begin{split}
&g_{a,b,k}(m,kh+b)\\
&=\sum_{i\geq 0}\mu^{m-h-i-1}\nu^{h+i+1}q^{ma+kh^2+kh+(b-a)(h+i+1)}{{h+i}\brack{h}}_k{{m-h-i-1}\brack{h}}_k.
\end{split}
\end{equation}
\end{lemma}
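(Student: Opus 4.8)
The plan is to prove \eqref{eqn-h-1}, \eqref{eqn-h-2} and \eqref{eqn-h-3} together by induction on the largest part, exploiting the rigidity forced by condition (2) of the basis $\mathcal{B}_{a,b,k}(m)$. First I would record the local structure already isolated in the discussion preceding the lemma: listing the admissible values in increasing order as $a,b,k+a,k+b,2k+a,\ldots$, condition (2) together with the non-increasing order forces any two adjacent parts of a $\lambda\in\mathcal{B}_{a,b,k}(m)$ to be equal or consecutive in this list, so the values actually occurring form a gap-free block whose bottom is $a$ or $b$. The consequence I want is a peeling recursion: deleting every copy of the largest part of $\lambda$ leaves a member of $\mathcal{B}_{a,b,k}$ whose largest part is the value immediately below. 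When the largest part is $kh+a$ with $h\ge 1$ (so $kh+a\notin\{a,b\}$ and the remainder is nonempty) this reads
\[g_{a,b,k}(m,kh+a)=\sum_{j\ge 1}\mu^{j}q^{j(kh+a)}\,g_{a,b,k}(m-j,k(h-1)+b),\]
when it is $kh+b$ with $h\ge 1$ it reads
\[g_{a,b,k}(m,kh+b)=\sum_{j\ge 1}\nu^{j}q^{j(kh+b)}\,g_{a,b,k}(m-j,kh+a),\]
and the largest part $a$ forces all parts equal to $a$, giving \eqref{eqn-h-1} outright as the base case.

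With these recursions I would induct on the index of the largest part in the list above. For the inductive step I substitute the known right-hand side into the relevant recursion; the $\mu$- and $\nu$-exponents then match directly in the first recursion and after the shift $i\mapsto i+j-1$ in the second, and collecting the $q$-powers shows that one of the two $q$-binomials is independent of the summation variable and factors out, while the other combines with the surviving power of $q$ into the left-hand side of a hockey-stick sum. The whole step thus collapses to the single iterated $q$-Pascal identity
\[\sum_{j\ge 0}q^{k(M+1)j}{{N-j}\brack{M}}_k={{N+1}\brack{M+1}}_k,\]
obtained by repeatedly applying ${{N+1}\brack{M+1}}_k={{N}\brack{M}}_k+q^{k(M+1)}{{N}\brack{M+1}}_k$. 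Taking $M=h-1$ promotes \eqref{eqn-h-3} at $h-1$ to \eqref{eqn-h-2} at $h$, and a second application with $M=h-1$ promotes \eqref{eqn-h-2} at $h$ to \eqref{eqn-h-3} at $h$; the case $h=0$ of \eqref{eqn-h-3} falls out of the same manipulation starting from \eqref{eqn-h-1}.

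The exponent bookkeeping is routine once the substitution is made, so the genuine point to get right, and the main obstacle, is the structural claim underlying the recursion: that removing the largest part creates no gap in the value block and leaves the smallest part still equal to $a$ or $b$, so that the remainder is again a legitimate basis partition counted by $g_{a,b,k}$. As a safeguard against offset errors in the surviving $q$-binomial I would check that summing \eqref{eqn-h-1}, \eqref{eqn-h-2} and \eqref{eqn-h-3} over all possible largest parts returns \eqref{a-b-g-gen}; there the pieces ${{h+i-1}\brack{h-1}}_k$ and $q^{kh}{{h+i-1}\brack{h}}_k$ recombine into ${{h+i}\brack{h}}_k$ by $q$-Pascal, which confirms that all the constants have been tracked correctly.
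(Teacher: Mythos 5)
Your proposal is correct, but it takes a genuinely different route from the paper's. The paper proves \eqref{eqn-h-2} (and, ``similarly,'' \eqref{eqn-h-3}) in one shot with no induction and no $q$-series identity: given $\lambda\in\mathcal{B}_{a,b,k}(m)$ with largest part $kh+a$, the gap-free structure forces one copy each of the staircase $kh+a,\,k(h-1)+b,\,k(h-1)+a,\ldots,k+a,\,b$; removing these $2h$ parts leaves a pair of mutually unconstrained partitions, one into parts $\equiv a\pmod{k}$ bounded by $kh+a$ and one into parts $\equiv b\pmod{k}$ bounded by $k(h-1)+b$, and the two $q$-binomial factors in \eqref{eqn-h-2} are precisely the generating functions of these two bounded families. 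You instead induct along the list $a<b<k+a<k+b<\cdots$, peeling off \emph{all} copies of the largest part and closing each step with the iterated $q$-Pascal (hockey-stick) sum; I verified the bookkeeping and it goes through: in the step producing \eqref{eqn-h-2} the factor ${{h+i-1}\brack{h-1}}_k$ is $j$-free and $\sum_{j\ge 1}q^{kh(j-1)}{{m-h-i-j}\brack{h-1}}_k={{m-h-i}\brack{h}}_k$, while in the step producing \eqref{eqn-h-3} the shift $i\mapsto i+j-1$ makes ${{m-h-i-1}\brack{h}}_k$ the $j$-free factor and the hockey stick yields ${{h+i}\brack{h}}_k$; your separate handling of the all-$b$ partition at $h=0$ (where the peeled remainder may be empty) is also needed and correctly flagged. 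What the paper's decomposition buys is a transparent combinatorial meaning for each factor of the formula; what yours buys is minimal structural input (only gap-freeness of the value block plus the recurrence \eqref{bin-new-r-1}), at the cost of pushing the work into $q$-binomial manipulation. It is worth noting that your peeling recursions are an ``all copies at once'' version of the paper's Lemma \ref{a-b-m-h-lem-0} (which removes a single largest part), a lemma the paper states \emph{after} this one and uses to assemble \eqref{a-b-g-gen}; the $q$-Pascal recombination ${{h+i-1}\brack{h-1}}_k+q^{kh}{{h+i-1}\brack{h}}_k={{h+i}\brack{h}}_k$ appearing in your final consistency check is exactly the computation in the paper's analytic proof of that lemma, so your approach in effect fuses the two lemmas into one induction.
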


\begin{proof}
It is clear that for $m\geq 1$, there is only one partition
\[(\underbrace{a,a,\ldots,a}_{m's})\]
in $\mathcal{B}_{a,b,k}(m)$ with the largest part $a$, which leads to \eqref{eqn-h-1}.

Then, we proceed to show \eqref{eqn-h-2}. For $m\geq 1$ and $h\geq 1$, let $\lambda$ be a partition in  $\mathcal{B}_{a,b,k}(m)$ with the largest part ${kh+a}$. There exist parts
\[kh+a,k(h-1)+b,k(h-1)+a,k(h-2)+b,\ldots,k+a,b\]
in $\lambda$. We remove one $kh+a$, one $k(h-1)+b$, \ldots, one $k+a$ and one $b$ from $\lambda$ and denote the resulting partition by $\alpha$. Clearly, there are $m-2h$ parts in $\alpha$ and the parts of $\alpha$ do not exceed ${kh+a}$. Let $\alpha^{(a)}$ (resp. $\alpha^{(b)}$) be the partition consisting of the parts equivalent to $a$ (resp. $b$) modulo $k$ in  $\alpha$. Then, the parts in $\alpha^{(a)}$ (resp. $\alpha^{(b)}$) do not exceed ${kh+a}$ (resp. ${k(h-1)+b}$). Assume that $\ell(\alpha^{(b)})=i$, then we have $\ell(\alpha^{(a)})=m-2h-i$. The process above to get $\alpha^{(a)}$ and $\alpha^{(b)}$ could be run in reverse.

The generating function for the partitions with $i$ parts not exceeding ${k(h-1)+b}$ and equivalent to $b$ modulo $k$ is
\[q^{bi}{{h+i-1}\brack{h-1}}_k.\]
   The generating function for the partitions with $m-2h-i$ parts not exceeding ${kh+a}$ and equivalent to $a$ modulo $k$ is
   \[q^{a(m-2h-i)}{{m-h-i}\brack{h}}_k.\]
So, we get
\begin{align*}
g_{a,b,k}(m,kh+a)&=\mu^{h}q^{k{{h+1}\choose 2}+ah}\nu^{h}q^{k{{h}\choose 2}+bh}\\
&\quad\times\sum_{i\geq 0}\nu^{i}q^{bi}{{h+i-1}\brack{h-1}}_k\mu^{m-2h-i}q^{a(m-2h-i)}{{m-h-i}\brack{h}}_k.
\end{align*}
Hence,  \eqref{eqn-h-2} is verified.

 With a similar argument above, we get \eqref{eqn-h-3}.  The proof is complete.
\end{proof}

We also need the following recurrence.
\begin{lemma}\label{a-b-m-h-lem-0}
For $m\geq 1$ and $h\geq 0$, we have
\begin{equation}\label{a-b-g-rec}
g_{a,b,k}(m+1,kh+b)=\nu q^{kh+b}(g_{a,b,k}(m,kh+a)+g_{a,b,k}(m,kh+b)).
\end{equation}
\end{lemma}
 We will present an analytic proof and a combinatorial proof of Lemma \ref{a-b-m-h-lem-0}. We first give an analytic proof of Lemma \ref{a-b-m-h-lem-0}.
 \begin{proof}[Analytic proof of Lemma \ref{a-b-m-h-lem-0}] For $h=0$, by Lemma \ref{a-b-m-h-lem}, we have
 \begin{align*}
 g_{a,b,k}(m,a)+g_{a,b,k}(m,b)&=\mu^{m}q^{ma}+\sum_{i=0}^{m-1}\mu^{m-i-1}\nu^{i+1}q^{ma+(b-a)(i+1)}\\
 &=\sum_{i=-1}^{m-1}\mu^{m-i-1}\nu^{i+1}q^{ma+(b-a)(i+1)}\\
 &=\sum_{i=0}^{m}\mu^{m-i}\nu^{i}q^{ma+(b-a)i}\\
 &=(\nu q^b)^{-1}\sum_{i=0}^{m}\mu^{(m+1)-i-1}\nu^{i+1}q^{(m+1)a+(b-a)(i+1)}\\
 &=(\nu q^b)^{-1}g_{a,b,k}(m+1,b).
 \end{align*}
 So, \eqref{a-b-g-rec} holds for $h=0$.

  For $h\geq 1$, again by Lemma \ref{a-b-m-h-lem}, we have
 \begin{align*}
 &\quad g_{a,b,k}(m,kh+a)+g_{a,b,k}(m,kh+b)\\
 &=\mu^{m-h}\nu^{h}q^{ma+kh^2+(b-a)h}{{m-h}\brack{h}}_k\\
 &\qquad+\sum_{i\geq 1}\mu^{m-h-i}\nu^{h+i}q^{ma+kh^2+(b-a)(h+i)}{{h+i-1}\brack{h-1}}_k{{m-h-i}\brack{h}}_k\\
 &\quad\qquad+\sum_{i\geq 1}\mu^{m-h-i}\nu^{h+i}q^{ma+kh^2+kh+(b-a)(h+i)}{{h+i-1}\brack{h}}_k{{m-h-i}\brack{h}}_k\\
 &=\mu^{m-h}\nu^{h}q^{ma+kh^2+(b-a)h}{{m-h}\brack{h}}_k\\
 &\quad+\sum_{i\geq 1}\mu^{m-h-i}\nu^{h+i}q^{ma+kh^2+(b-a)(h+i)}{{m-h-i}\brack{h}}_k\\
 &\qquad\qquad\times\left\{{{h+i-1}\brack{h-1}}_k+q^{kh}{{h+i-1}\brack{h}}_k\right\}.\\
 \end{align*}
 Combining with the standard recurrence for the $q$-binomial coefficients \cite[(3.3.4)]{Andrews-1976}:
\begin{equation}\label{bin-new-r-1}
{A\brack B}_k={{A-1}\brack{B-1}}_k+q^{kB}{{A-1}\brack{B}}_k,
\end{equation}
 we get
 \begin{align*}
 &\quad g_{a,b,k}(m,kh+a)+g_{a,b,k}(m,kh+b)\\
 &=\mu^{m-h}\nu^{h}q^{ma+kh^2+(b-a)h}{{m-h}\brack{h}}_k\\
 &\quad+\sum_{i\geq 1}\mu^{m-h-i}\nu^{h+i}q^{ma+kh^2+(b-a)(h+i)}{{m-h-i}\brack{h}}_k{{h+i}\brack{h}}_k\\
 &=\sum_{i\geq 0}\mu^{m-h-i}\nu^{h+i}q^{ma+kh^2+(b-a)(h+i)}{{m-h-i}\brack{h}}_k{{h+i}\brack{h}}_k\\
 &=(\nu q^{kh+b})^{-1}\sum_{i\geq 0}\mu^{m-h-i}\nu^{h+i+1}q^{(m+1)a+kh^2+kh+(b-a)(h+i+1)}{{m-h-i}\brack{h}}_k{{h+i}\brack{h}}_k\\
 &=(\nu q^{kh+b})^{-1}g_{a,b,k}(m+1,kh+b).
 \end{align*}
We arrive at \eqref{a-b-g-rec} for $h\geq 1$. This completes the proof.
 \end{proof}

Then, we give a combinatorial proof of Lemma \ref{a-b-m-h-lem-0}.
 \begin{proof}[Combinatorial proof of Lemma \ref{a-b-m-h-lem-0}]
Let $\lambda=(\lambda_1,\lambda_2,\ldots,\lambda_{m+1})$ be a partition in $\mathcal{B}_{a,b,k}(m+1)$ with the largest part ${kh+b}$. Then, we have $\lambda_1=kh+b$. By the definition of $\mathcal{B}_{a,b,k}(m+1)$, we get
 $\lambda_2>\lambda_1-k=k(h-1)+b$, and so $\lambda_2=kh+b$ or $kh+a$. If we remove the largest part $kh+b$ from $\lambda$, then we can get a partition in $\mathcal{B}_{a,b,k}(m)$ with the largest part $kh+b$ or $kh+a$, and vice versa. This implies that \eqref{a-b-g-rec} holds. The proof is complete.
 \end{proof}

 We are now in a position to give a proof of \eqref{a-b-g-gen}.
 \begin{proof}[Proof of \eqref{a-b-g-gen}]
  Combining Lemmas \ref{a-b-m-h-lem} and \ref{a-b-m-h-lem-0}, we obtain that for $m\geq 1$,
 \begin{align*}
 g_{a,b,k}(m)&=\sum_{h\geq 0}(g_{a,b,k}(m,kh+a)+g_{a,b,k}(m,kh+b))\\
 &=\sum_{h\geq 0}(\nu q^{kh+b})^{-1}g_{a,b,k}(m+1,kh+b)\\
 &=\sum_{h\geq 0}\sum_{i\geq 0}\mu^{m-h-i}\nu^{h+i}q^{ma+kh^2+(b-a)(h+i)}{{h+i}\brack{h}}_k{{m-h-i}\brack{h}}_k.
 \end{align*}
 We arrive at \eqref{a-b-g-gen}. The proof is complete.
 \end{proof}

\section{Proofs of Theorem \ref{o-p-relation}}

The main objective of this section is to prove Theorem \ref{o-p-relation}. We will present two proofs, an analytic proof and a combinatorial proof. We first give an analytic proof of Theorem \ref{o-p-relation}.

\begin{proof}[Analytic proof of Theorem \ref{o-p-relation}] It is clear from \eqref{gen-o-kr-eqn} that

\begin{equation}\label{gen-o-kk-eqn}
\sum_{\ell_o,m,n\geq 0}{O}_{k,k}(\ell_o,m,n)x^{\ell_o}z^mq^n=\frac{(-xzq^k;q^k)_\infty}{(zq;q)_\infty}.
\end{equation}

The generating function for $P_{k}(\ell_o,m,n)$ is
\begin{equation}\label{gen-p-kk-eqn}
\begin{split}
&\quad\sum_{\ell_o,m,n\geq 0}{P}_{k}(\ell_o,m,n)x^{\ell_o}z^mq^n\\
&=
\prod_{j=1}^{\infty}\left(1+zq^j+\cdots+z^{k-1}q^{(k-1)j}+(1+x)z^{k}q^{kj}+(1+x)z^{k+1}q^{(k+1)j}+\cdots\right)\\
&=\prod_{j=1}^{\infty}\left(\frac{1-z^kq^{kj}}{1-zq^j}+\frac{(1+x)z^{k}q^{kj}}{1-zq^j}\right)\\
&=\prod_{j=1}^{\infty}\frac{1+xz^{k}q^{kj}}{1-zq^j}\\
&=\frac{(-xz^{k}q^k;q^k)_\infty}{(zq;q)_\infty}.
\end{split}
\end{equation}

Letting $x\rightarrow xz^{-(k-1)}$ in \eqref{gen-p-kk-eqn}, we get
\begin{equation*}\label{gen-p-kk-1-eqn}
\quad\sum_{\ell_o,m,n\geq 0}P_{k}(\ell_o,m+(k-1)\ell_o,n)x^{\ell_o}z^mq^n=\frac{(-xzq^k;q^k)_\infty}{(zq;q)_\infty}.
\end{equation*}
Combining with \eqref{gen-o-kk-eqn}, we arrive at \eqref{eqn-o-p}. This completes the proof.
\end{proof}

Then, we give a combinatorial proof of Theorem \ref{o-p-relation}.

{\noindent \it Combinatorial proof of Theorem \ref{o-p-relation}:} Let $\pi=(\pi_1,\pi_2,\ldots,\pi_m)$ be an overpartition counted by ${O}_{k,k}(\ell_o,m,n)$. Assume that $\overline{k\zeta_1}$, $\overline{k\zeta_2}$,\ldots, $\overline{k\zeta_{\ell_o}}$ are the $\ell_o$ overlined parts in $\pi$, where $\zeta_1>\zeta_2>\cdots>\zeta_{\ell_o}\geq 1$. We first remove $\overline{k\zeta_1}$, $\overline{k\zeta_2}$,\ldots, $\overline{k\zeta_{\ell_o}}$ from $\pi$ and then add
\[\underbrace{\zeta_1,\ldots,\zeta_1,}_{(k-1)'s}\overline{\zeta_1},\underbrace{\zeta_2,\ldots,\zeta_2,}_{(k-1)'s}\overline{\zeta_2},\ldots,\underbrace{\zeta_{\ell_o},\ldots,\zeta_{\ell_o},}_{(k-1)'s}\overline{\zeta_{\ell_o}}\]
as new parts into the resulting partition to get a partition $\lambda$.
Clearly, $\lambda$ is a partition enumerated by $P_{k}(\ell_o,m+(k-1)\ell_o,n)$. Obviously, the process above is reversible. The proof is complete.  \qed

For example, let $\pi=(\bar{9},7,6,6,5,\bar{3},3,1,1)$ be an overpartition enumerated by ${O}_{3,3}(2,9,41)$. We
first remove $\bar{9}$ and $\bar{3}$ from $\pi$ and then add $3,3,\bar{3},1,1,\bar{1}$ as new parts into the resulting partition to get $\lambda=(7,6,6,5,3,3,\bar{3},3,1,1,\bar{1},1,1)$. Clearly, $\lambda$ is a partition enumerated by $P_{3}(2,13,41)$. Moreover, the process to get $\lambda$ could be run in reverse.

\section{$(k,r)$-modulo overpartitions}

In this section, we first introduce separable overpartition classes with modulus $k$. Then, we show that $\mathcal{M}_{k,r}$ is a separable overpartition class with modulus $k$. Finally, we give a proof of \eqref{gen-m-kr-eqn}.

\subsection{Separable overpartition classes with modulus $k$}

 In \cite{Chen-He-Tang-Wei-2024}, Chen, He, Tang and Wei extended separable integer partition classes with modulus $1$ introduced by Andrews \cite{Andrews-2022} to overpartitions, called separable overpartition classes, and then studied overpartitions and the overpartition analogue of Rogers-Ramanujan identities from the view of separable overpartition classes.

 In this article, we extend separable integer partition classes with modulus $k$ to overpartitions, which is stated as follows. Here and in the sequel,
we adopt the following convention: For positive integer $t$ and non-negative integer $b$, we define
\[\overline{t}\pm b=\overline{t\pm b}\text{ and }\overline{t}-\overline{b}=t-b.\]

 \begin{definition}\label{defi-separable}
A subset $\mathcal{P}$ of all the overpartitions is called a separable overpartition class with modulus $k$ if
$\mathcal{P}$   satisfies the following{\rm:}

There is a subset $\mathcal{B}$ of $\mathcal{P}$ {\rm(}$\mathcal{B}$ is called the basis of $\mathcal{P}${\rm)} such that for each integer $m\geq 1$,
 \begin{itemize}
\item[\rm{(1)}]  the number of overpartitions in $\mathcal{B}$ with $m$ parts is finite{\rm;}
\item[\rm{(2)}] every overpartition in $\mathcal{P}$ with $m$ parts is uniquely of the form
\begin{equation}\label{over-form-1}
(\lambda_1+\mu_1)+(\lambda_2+\mu_2)+\cdots+(\lambda_m+\mu_m),
\end{equation}
where $(\lambda_1,\lambda_2,\ldots,\lambda_m)$ is an overpartition in $\mathcal{B}$ and $(\mu_1,\mu_2,\ldots,\mu_m)$ is a  non-increasing sequence of nonnegative integers, whose only restriction is that each part is divisible by $k${\rm;}
\item[\rm{(3)}] all overpartitions of the form \eqref{over-form-1} are in $\mathcal{P}$.

\end{itemize}
\end{definition}

Assume that $\mathcal{P}$ is a separable overpartition class with modulus $k$ and  $\mathcal{B}$ is the basis of $\mathcal{P}$. Let $\pi=(\pi_1,\pi_2,\ldots,\pi_m)$ be an overpartition in $\mathcal{P}$.
Then, there exist unique overpartition $\lambda=(\lambda_1,\lambda_2,\ldots,\lambda_m)$ in $\mathcal{B}$ and non-increasing sequence  $(\mu_1,\mu_2,\ldots,\mu_m)$ of nonnegative integers divisible by $k$ such that $\pi_i=\lambda_i+\mu_i$ for $1\leq i\leq m$.
Clearly, the number of overlined parts in $\pi$ equals the number of overlined parts in $\lambda$.

For $m\geq 1$, let $\mathcal{B}(m)$ be the set of overpartitions in $\mathcal{B}$ with  $m$ parts. Then, the generating function for  the overpartitions in $\mathcal{P}$ is
 \begin{equation}\label{useful}
  \sum_{\pi\in\mathcal{P}}x^{\ell_o(\pi)}z^{\ell(\pi)}q^{|\pi|}=1+\sum_{m\geq 1}\frac{z^m}{(q^k;q^k)_m}\sum_{\lambda\in\mathcal{B}(m)}x^{\ell_o(\lambda)}q^{|\lambda|}.
  \end{equation}

 \subsection{The basis of $\mathcal{M}_{k,r}$}

 The objective of this subsection is to show that $\mathcal{M}_{k,r}$ is a separable overpartition class with modulus $k$. To do this, we are required to find the basis of $\mathcal{M}_{k,r}$, which involves the following set. We impose the following order on the parts of an overpartition:
\[1<\bar{1}<2<\bar{2}<\cdots.\]

 \begin{definition}\label{base-M_kr}
For $m\geq 1$, define $\mathcal{B}_{k,r}(m)$ to be the set of overpartitions of the form $\lambda=(\lambda_{1},\lambda_{2},\ldots,\lambda_{m})$ satisfying the following conditions:
\begin{itemize}
\item[{\rm(1)}] Only parts equivalent to $r$ modulo $k$ may be overlined{\rm;}
\item[{\rm(2)}] $\lambda_{m}\leq\bar{r}${\rm;}
\item[{\rm(3)}] for $1\leq i<m,$ if $\overline{k(j-1)+r}\leq\lambda_{i+1}\leq {kj+r}$ then we have $\lambda_{i}\leq \overline{kj+r}$.
\end{itemize}
\end{definition}

For example, there are nineteen overpartitions in $\mathcal{B}_{3,1}(3)$.
\[(1,1,1),(\bar{1},1,1),(2,\bar{1},1),(3,\bar{1},1),(4,\bar{1},1),(\bar{4},\bar{1},1),\]
\[(2,2,\bar{1}),(3,2,\bar{1}),(4,2,\bar{1}),(\bar{4},2,\bar{1}),(3,3,\bar{1}),(4,3,\bar{1}),(\bar{4},3,\bar{1}),\]
\[(4,4,\bar{1}),(\bar{4},4,\bar{1}),(5,\bar{4},\bar{1}),(6,\bar{4},\bar{1}),(7,\bar{4},\bar{1}),(\bar{7},\bar{4},\bar{1}).\]

Set
\[\mathcal{B}_{k,r}=\bigcup_{m\geq 1}\mathcal{B}_{k,r}(m).\]
We proceed to show that $\mathcal{M}_{k,r}$ is a separable overpartition class with modulus $k$.

\begin{theorem}\label{m-b-class}
$\mathcal{M}_{k,r}$ is a separable overpartition class with modulus $k$ and  $\mathcal{B}_{k,r}$ is the basis of $\mathcal{M}_{k,r}$.
\end{theorem}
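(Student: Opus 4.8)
The plan is to verify the three clauses of Definition~\ref{defi-separable} directly by producing, for each $m\geq1$, a bijection between the overpartitions of $\mathcal{M}_{k,r}$ with $m$ parts and the pairs $(\lambda,\mu)$ in which $\lambda\in\mathcal{B}_{k,r}(m)$ and $\mu=(\mu_1,\ldots,\mu_m)$ is a non-increasing sequence of nonnegative multiples of $k$. One map is the staircase addition $\Psi\colon(\lambda,\mu)\mapsto(\lambda_1+\mu_1,\ldots,\lambda_m+\mu_m)$ of \eqref{over-form-1}; its inverse $\Phi$ is a greedy reduction. Clause (1) (finiteness) is cheap, so the real content is that every $\pi\in\mathcal{M}_{k,r}$ has a \emph{unique} such decomposition, which is clauses (2) and (3).

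It helps first to record the structural fact behind Definition~\ref{base-M_kr}: the admissible parts split into consecutive ranges $R_j$ $(j\geq0)$, where $R_0=\{1,2,\ldots,r\}$ and, for $j\geq1$, $R_j=\{\overline{k(j-1)+r}\}\cup\{k(j-1)+r+1,\ldots,kj+r\}$; these partition all parts, each $\lambda_{i+1}$ lies in a unique $R_j$, and condition (3) then reads $\lambda_i\leq\overline{kj+r}$. Within a block $\varphi_{k,r}$ increases with the part, the unique residue-$r$ part being the (possibly overlined) top. With this I would check $\mathcal{B}_{k,r}\subseteq\mathcal{M}_{k,r}$: condition (2) of Definition~\ref{base-M_kr} forces $\lambda_m\in\{1,\ldots,r,\bar r\}$, so $\pi_m\equiv1,\ldots,r\pmod k$; and if $\varphi_{k,r}(\lambda_i)<\varphi_{k,r}(\lambda_{i+1})$ then, inspecting the $\varphi$-values across the window $\overline{k(j-1)+r}\leq\lambda_{i+1}\leq\lambda_i\leq\overline{kj+r}$, the part $\lambda_{i+1}$ can only be the overlined bottom of $R_j$, hence is overlined, which is condition (2) of Definition~\ref{defi_kr_modulo}. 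Since adding a multiple of $k$ preserves residue, overlining and the order $1<\bar1<2<\cdots$ (by the convention $\bar t+b=\overline{t+b}$), these two conditions, the non-increasing shape, and the distinctness of overlined values all persist after adding $\mu$; thus $\Psi$ is order-preserving and lands in $\mathcal{M}_{k,r}$, giving clause (3).

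For the inverse I would build $\Phi$ from the bottom up: let $\mu_m$ be the multiple of $k$ lowering $\pi_m$ to its least representative $\lambda_m\leq\bar r$, and, having fixed $\lambda_{i+1}\in R_j$, let $\mu_i$ be the least multiple of $k$ with $\mu_i\geq\mu_{i+1}$ and $\lambda_i:=\pi_i-\mu_i\leq\overline{kj+r}$. By construction $\mu$ is non-increasing and $\lambda$ satisfies conditions (1)--(3) of Definition~\ref{base-M_kr}. The one nontrivial point is that $\lambda$ stays non-increasing: this is automatic unless the bound $\lambda_i\leq\overline{kj+r}$ is active, and when it is active with $\pi_i$ overlined one gets $\lambda_i=\overline{kj+r}>\lambda_{i+1}$, while with $\pi_i$ unoverlined and $\varphi_{k,r}(\pi_i)=s$ one gets $\lambda_i=kj+s$, so a drop $\lambda_i<\lambda_{i+1}$ would force $\lambda_{i+1}=kj+s'$ unoverlined with $s'>s$, i.e.\ $\varphi_{k,r}(\pi_i)<\varphi_{k,r}(\pi_{i+1})$ with $\pi_{i+1}$ unoverlined, contradicting condition (2) of Definition~\ref{defi_kr_modulo}. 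Hence $\Phi$ is well defined into $\mathcal{B}_{k,r}(m)\times\{\mu\}$, and $\Psi\circ\Phi=\mathrm{id}$ is immediate since $\mu_i=\pi_i-\lambda_i$.

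The crux, and the step I expect to be the main obstacle, is $\Phi\circ\Psi=\mathrm{id}$: that the greedy reduction cannot over-reduce a genuine basis element. Arguing by induction from the bottom, suppose $\Phi$ has recovered $(\lambda_{i+1},\mu_{i+1})$ from $\Psi(\lambda,\mu)$; I must exclude $\Phi$ choosing some $\mu_i'<\mu_i$, which would give $\lambda_i'=\lambda_i+(\mu_i-\mu_i')\geq\lambda_i+k$. Here the overpartition constraint is decisive: with $\lambda_{i+1}\in R_j$ and $\lambda\in\mathcal{B}_{k,r}(m)$ a genuine overpartition, one cannot have $\lambda_i=\lambda_{i+1}=\overline{k(j-1)+r}$ (two equal overlined parts are forbidden), so $\lambda_i>\overline{k(j-1)+r}$, whence $\lambda_i+k>\overline{kj+r}$ and the constraint $\lambda_i'\leq\overline{kj+r}$ fails; therefore $\mu_i'=\mu_i$. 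Finally, for clause (1), iterating condition (3) upward from $\lambda_m\leq\bar r=\overline{k\cdot0+r}$ yields $\lambda_i\leq\overline{k(m-i)+r}$, so $\mathcal{B}_{k,r}(m)$ is finite. Assembling these, $\Phi=\Psi^{-1}$ is a bijection, establishing all three clauses and proving that $\mathcal{M}_{k,r}$ is a separable overpartition class with modulus $k$ whose basis is $\mathcal{B}_{k,r}$.
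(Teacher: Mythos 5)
Your proposal is correct and follows essentially the same route as the paper: both exhibit $\mathcal{B}_{k,r}$ as the basis by checking the three clauses of Definition~\ref{defi-separable}, with the decomposition of $\pi\in\mathcal{M}_{k,r}$ determined by the unique basis element matching $\pi$ part-by-part in residue and overlining, reconstructed from the smallest part upward. The paper states this existence and uniqueness rather tersely, whereas you make the bottom-up (greedy) reconstruction and the ``cannot over-reduce'' uniqueness step explicit; this is a difference in level of detail, not of method.
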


\begin{proof} We just need to show that $\mathcal{B}_{k,r}$ is the basis of $\mathcal{M}_{k,r}$. Clearly, $\mathcal{B}_{k,r}$ is a subset of $\mathcal{M}_{k,r}$. It remains to prove that $\mathcal{B}=\mathcal{B}_{k,r}$ and $\mathcal{M}=\mathcal{M}_{k,r}$ satisfies the conditions (1)-(3) in Definition \ref{defi-separable}.

 By the conditions (2) and (3) in Definition \ref{base-M_kr}, we find that for $m\geq 1$, the number of overpartitions in $\mathcal{B}_{k,r}(m)$ does not exceed $(r+1)\times(k+1)^{m-1}$, which yields that the number of overpartitions in $\mathcal{B}_{k,r}$ with $m$ parts is finite. So, $\mathcal{B}_{k,r}$ satisfies the condition (1) in Definition \ref{defi-separable}.

 We proceed to show that $\mathcal{B}_{k,r}$ and $\mathcal{M}_{k,r}$ satisfies the conditions (2) in Definition \ref{defi-separable}. For $m\geq 1$, let $\pi=(\pi_1,\pi_2,\ldots,\pi_m)$ be an overpartition in $\mathcal{M}_{k,r}$ with $m$ parts. There exists an unique overpartition $(\lambda_1,\lambda_2,\ldots,\lambda_m)$ in $\mathcal{B}_{k,r}$ such that $\pi_i\equiv \lambda_i\pmod{k}$ for $1\leq i\leq m$. Moreover precisely, assume that there are $j$ overlined parts in $\pi$, we consider the following two cases.

 Case 1: $j=0$. In this case, we have $\lambda_1\leq r$.

 Case 2: $j\geq 1$. Assume that $\pi_{i_1}<\pi_{i_2}<\cdots<\pi_{i_j}$ are the $j$ overlined parts in $\pi$. Then, we have $\lambda_{i_1}=\overline{r}$, $\lambda_{i_2}=\overline{k+r}$,\ldots, $\lambda_{i_j}=\overline{k(j-1)+r}$.

 In either case, we see that for $1\leq i\leq m$, $\pi_i$ and $\lambda_i$ are both overlined or both non-overlined. For $1\leq i\leq m$, we set $\mu_i=\pi_i-\lambda_i$.
Clearly, $(\mu_1,\mu_2,\ldots,\mu_m)$ is a  non-increasing sequence of nonnegative integers  divisible by $k$.
This implies that $\mathcal{B}_{k,r}$ and $\mathcal{M}_{k,r}$ satisfies the conditions (2) in Definition \ref{defi-separable}.

It is clear that $\mathcal{B}_{k,r}$ and $\mathcal{M}_{k,r}$ satisfies the conditions (3) in Definition \ref{defi-separable}. Now, we can conclude that $\mathcal{B}_{k,r}$ is the basis of $\mathcal{M}_{k,r}$. This completes the proof.
\end{proof}

\subsection{Proof of \eqref{gen-m-kr-eqn}}

In this subsection, we aim to give a proof of \eqref{gen-m-kr-eqn}. To do this, we first give the generating functions for the overpartitions in $\mathcal{B}_{k,r}(m)$ with the largest part $l$ (resp. $\bar{l}$), denoted ${g}_{k,r}(m,l)$ (resp. ${g}_{k,r}(m,\bar{l})$).

\begin{lemma}\label{bkrj-lem}
Assume that $m\geq 1$. For $j\geq 1$, we have
\[
g_{k,r}(m,\overline{k(j-1)+r})=q^{m-j+k{j\choose 2}+rj}{{m-j+k(j-1)+r-1}\brack{k(j-1)+r-1}}_1.
\]
For $j=1$ and $1\leq s\leq r$, or $j\geq 2$ and $-k+r+1\leq s\leq r$, we have
\begin{equation}\label{bkrjs-eqn}
g_{k,r}(m,{k(j-1)+s})=q^{m-j+k{j\choose 2}+r(j-1)+s}{{m-j+k(j-1)+s-1}\brack{k(j-1)+s-1}}_1.
\end{equation}
\end{lemma}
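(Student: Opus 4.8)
The plan is to establish both identities at once by peeling a rigid staircase of overlined parts off each basis overpartition and recognizing what remains as a partition confined to a rectangular box, whose generating function is the Gaussian polynomial. The structural input I would use is already available from Theorem~\ref{m-b-class}: applying the unique decomposition in its proof to an element $\lambda\in\mathcal B_{k,r}(m)$ itself (so that the $k$-divisible summand vanishes) forces the $\ell_o=\ell_o(\lambda)$ overlined parts of $\lambda$ to be exactly the ``window bottoms'' $\bar r,\overline{k+r},\ldots,\overline{k(\ell_o-1)+r}$. Prescribing the largest part then pins down $\ell_o$. If the largest part is the overlined $\overline{k(j-1)+r}$, it is the top of this staircase, so $\ell_o=j$. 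If the largest part is a non-overlined $k(j-1)+s$ with $s\le r$, it lies strictly between $\overline{k(j-2)+r}$ and $\overline{k(j-1)+r}$; the no-skip nature of condition~(3) of Definition~\ref{base-M_kr}, together with $\lambda_m\le\bar r$, then forces all of $\bar r,\ldots,\overline{k(j-2)+r}$ to occur, whence $\ell_o=j-1$.

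With $\ell_o$ fixed I would split the weight into a determined piece and a free piece. The overlined staircase contributes $\sum_{t=0}^{\ell_o-1}(kt+r)=r\ell_o+k\binom{\ell_o}{2}$, and in the non-overlined case the prescribed largest part adds a further $k(j-1)+s$. Taking each of the remaining $m-j$ non-overlined parts equal to $1$ gives the minimal overpartition with the prescribed largest part, and I would check that its weight matches the monomial in front of each Gaussian polynomial: directly $q^{m-j+k\binom{j}{2}+rj}$ when $\ell_o=j$, and, when $\ell_o=j-1$, $q^{m-j+k\binom{j}{2}+r(j-1)+s}$ after using $\binom{j-1}{2}+(j-1)=\binom{j}{2}$ to merge $k\binom{j-1}{2}$ with the $k(j-1)$ carried by the largest part.

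It then remains to account for the Gaussian polynomial. In either case there are $m-j$ non-overlined parts that are not fixed; each is a positive integer bounded above by the largest available non-overlined value — namely $k(j-1)+r$ in the first case and $k(j-1)+s$ in the second — and together they form a non-increasing sequence. Subtracting $1$ from each converts them into a partition with at most $m-j$ parts, each at most $k(j-1)+r-1$ (resp.\ $k(j-1)+s-1$), whose generating function is ${m-j+k(j-1)+r-1\brack k(j-1)+r-1}_1$ (resp.\ the analogue with $s$); multiplying by the prefactor yields the two claimed formulas, the $j=1$ non-overlined case (no overlined parts, largest part in the base range $1\le s\le r$) being handled by the same bookkeeping. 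The step I expect to be the crux is justifying that these remaining parts are genuinely \emph{free}: I must verify that, once the window bottoms $\bar r,\ldots,\overline{k(\ell_o-1)+r}$ are in place, every non-increasing choice within the stated range satisfies condition~(3), and conversely that condition~(3) imposes nothing beyond this range. The key observation is that all non-fixed parts lie in windows of index at most the level of the largest part, so in the sorted overpartition each intermediate window is entered precisely at its bottom; since those bottoms are exactly the overlined parts already present, no forbidden jump can occur, while inside a single window the relevant inequality is automatic. Packaging this as a weight-preserving bijection between pairs (overlined staircase, boxed partition) and the overpartitions in $\mathcal B_{k,r}(m)$ with the prescribed largest part completes the proof.
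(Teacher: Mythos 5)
Your proof is correct and takes essentially the same approach as the paper's: identify the forced staircase of overlined parts $\bar r,\overline{k+r},\ldots$, remove it together with the largest part, and recognize the remaining $m-j$ parts, after subtracting $1$ from each, as a partition in a box whose generating function is the Gaussian polynomial. The only cosmetic difference is that the paper treats the overlined case by first toggling the overline on the largest part (giving $g_{k,r}(m,\overline{k(j-1)+r})=g_{k,r}(m,k(j-1)+r)$) and then applying \eqref{bkrjs-eqn} with $s=r$, whereas you peel both cases uniformly and, unlike the paper, spell out the reversibility (``freeness'') step explicitly.
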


\begin{proof} We first show that for $j\geq 1$,
   \begin{equation}\label{overlinerr}
   g_{k,r}(m,\overline{k(j-1)+r})=g_{k,r}(m,{k(j-1)+r}).
   \end{equation}

   For an overpartition $\lambda$ in $\mathcal{B}_{k,r}(m)$ with the largest part $\overline{k(j-1)+r}$, we can get an overpartition in $\mathcal{B}_{k,r}(m)$ with the largest part ${k(j-1)+r}$ by changing the overlined part $\overline{k(j-1)+r}$ in $\lambda$ to a non-overlined part ${k(j-1)+r}$, and vice versa.
   This implies that \eqref{overlinerr} holds.

   It remains to show that \eqref{bkrjs-eqn} is valid. Let $\lambda$ be an overpartition in  $\mathcal{B}_{k,r}(m)$ with the largest part ${k(j-1)+s}$. We remove some parts from $\lambda$. There are two cases.

   Case 1: $j=1$ and $1\leq s\leq r$. In this case, there are no overlined parts in $\lambda$. We remove one $s$ from $\lambda$ to get $\alpha$ and set $\beta=(s)$.

   Case 2: $j\geq 2$ and $-k+r+1\leq s\leq r$. In this case, there are $j-1$ overlined parts in $\lambda$, which are $\overline{k(j-2)+r}$,\ldots, $\overline{k+r}$, $\overline{r}$. We remove the $j-1$ overlined parts and one ${k(j-1)+s}$ from $\lambda$ to get $\alpha$ and set $\beta=({k(j-1)+s},\overline{k(j-2)+r},\ldots,\overline{k+r},\overline{r})$.

   In conclusion, $|\beta|=k{j\choose 2}+r(j-1)+s$, there are $m-j$ parts in $\alpha$, there are no overlined parts in $\alpha$ and the parts of $\alpha$ do not exceed ${k(j-1)+s}$. We delete the parts $1$ in $\alpha$ and subtract one from the remaining parts of $\alpha$ to get $\gamma$. Then, $\gamma$ is a partition with at most $m-j$ parts not exceeding ${k(j-1)+s-1}$. Clearly, the process above to get $\beta$ and $\gamma$ could be run in reverse.

   The generating function for the partitions with at most $m-j$ parts not exceeding ${k(j-1)+s-1}$ is
   \[{{m-j+k(j-1)+s-1}\brack{k(j-1)+s-1}}_1.\]
   So, we get
   \[g_{k,r}(m,{k(j-1)+s})=q^{k{j\choose 2}+r(j-1)+s}\times q^{m-j}{{m-j+k(j-1)+s-1}\brack{k(j-1)+s-1}}_1.\]
   Hence,  \eqref{bkrjs-eqn} is verified. The proof is complete.
   \end{proof}

 We also need the following recurrences.

  \begin{lemma}\label{recurrence-lem}
  For $m\geq 1$, we have
  \begin{equation}\label{recurrence-eqn-1}
  g_{k,r}(m+1,\overline{r})=q^r\sum_{s=1}^{r}g_{k,r}(m,s).
  \end{equation}
  For $m\geq 1$ and $j\geq1$,  we have
   \begin{equation}\label{recurrence-eqn-2}
  g_{k,r}(m+1,\overline{kj+r})=q^{kj+r}\left(g_{k,r}(m,\overline{k(j-1)+r})+\sum_{s=-k+r+1}^{r}g_{k,r}(m,{kj+s})\right).
  \end{equation}
  \end{lemma}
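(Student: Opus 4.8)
The plan is to prove both recurrences combinatorially, using the same device as in the combinatorial proof of Lemma \ref{a-b-m-h-lem-0}: delete the overlined largest part of an overpartition counted by the left-hand side and track which values the new largest part may take. Throughout I will use the total order $1<\bar 1<2<\bar 2<\cdots$ together with the fact that in an overpartition each value occurs overlined at most once, and I will organize the admissible parts into blocks $[\overline{k(j-1)+r},\,kj+r]$, the $j$-th block consisting of the overlined bottom $\overline{k(j-1)+r}$ followed by the non-overlined integers $k(j-1)+r+1,\ldots,kj+r$. Condition (3) of Definition \ref{base-M_kr} then reads: if $\lambda_{i+1}$ lies in the $j$-th block, then $\lambda_i\le\overline{kj+r}$.

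For \eqref{recurrence-eqn-1} I would take $\lambda=(\lambda_1,\ldots,\lambda_{m+1})\in\mathcal{B}_{k,r}(m+1)$ with $\lambda_1=\bar r$ and delete this part, contributing the factor $q^r$. Since $\bar r$ may occur only once and $\lambda_2\le\bar r$, the new largest part $\lambda_2$ must be a non-overlined integer $s$ with $1\le s\le r$; conditions (1)--(3) of Definition \ref{base-M_kr} are inherited by $(\lambda_2,\ldots,\lambda_{m+1})$, so it lies in $\mathcal{B}_{k,r}(m)$ with largest part $s$. Conversely, prepending $\bar r$ to any such overpartition is admissible: the pair $(\bar r,s)$ satisfies condition (3) because $s$ lies in the $j=0$ block and $\bar r\le\overline{k\cdot 0+r}=\bar r$, and no repeated overlined part is created. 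This bijection yields \eqref{recurrence-eqn-1}.

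For \eqref{recurrence-eqn-2} with $j\ge 1$ I would delete the largest part $\lambda_1=\overline{kj+r}$, contributing $q^{kj+r}$. Because $\overline{kj+r}$ occurs only once, $\lambda_2\le kj+r$; and condition (3) applied to the pair $(\lambda_1,\lambda_2)$ forces $\lambda_2$ into a block of index at least $j$, while $\lambda_2\le kj+r$ forces it into a block of index at most $j$. Hence $\lambda_2$ lies in exactly the $j$-th block, so it is either the overlined bottom $\overline{k(j-1)+r}$ or a non-overlined integer $kj+s$ with $-k+r+1\le s\le r$. Grouping the two possibilities into the generating functions $g_{k,r}(m,\overline{k(j-1)+r})$ and $g_{k,r}(m,kj+s)$, and checking that prepending $\overline{kj+r}$ is always admissible, gives \eqref{recurrence-eqn-2}.

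The main obstacle is the bookkeeping in the third paragraph: one must argue precisely that condition (3), together with non-increase and the single-occurrence rule for overlined parts, pins $\lambda_2$ down to the single block $[\overline{k(j-1)+r},\,kj+r]$, and then enumerate that block correctly---in particular isolating the overlined value $\overline{k(j-1)+r}$, which accounts for the extra term absent from \eqref{recurrence-eqn-1} (where the would-be block bottom $\overline{r-k}$ is not a legal part). As an alternative, both identities can be verified analytically from the closed forms in Lemma \ref{bkrj-lem} using the $q$-binomial recurrence \eqref{bin-new-r-1} and the summation $\sum_{t=0}^{n}q^{t}{{M-1+t}\brack{t}}_1={{M+n}\brack{n}}_1$, but the combinatorial argument is shorter and more transparent.
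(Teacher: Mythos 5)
Your combinatorial argument is correct and is essentially the paper's own combinatorial proof of Lemma \ref{recurrence-lem}: delete the overlined largest part, and use condition (3) of Definition \ref{base-M_kr} together with the single-occurrence rule for overlined parts to confine the new largest part to $\{1,\ldots,r\}$ in the $j=0$ case and to the block $[\overline{k(j-1)+r},\,kj+r]$ in the $j\geq 1$ case, the bijection being completed by prepending the removed part. The analytic alternative you mention at the end is precisely the paper's other proof (via Lemma \ref{bkrj-lem}, \eqref{bin-new-r-1}, and the $q$-analogue of the hockey-stick identity), so both routes you describe match the source.
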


  We will present an analytic proof and a combinatorial proof of Lemma \ref{recurrence-lem}. We first give an analytic proof of Lemma \ref{recurrence-lem}.

  \begin{proof}[Analytic proof of Lemma \ref{recurrence-lem}.] Combining Lemma \ref{bkrj-lem} and the following recurrence for the $q$-binomial coefficients \cite[(3.3.9)]{Andrews-1976}:
\[{{A+B+1}\brack{B+1}}_1=\sum_{s=0}^{A}q^s{{B+s}\brack{B}}_1\text{ for }A,B\geq 0,\]
   we obtain that for $m\geq 1$,
\begin{align*}
\sum_{s=1}^{r}g_{k,r}(m,s)&=\sum_{s=1}^{r}q^{m-1+s}{{m-1+s-1}\brack {s-1}}_1\\
&=q^m\sum_{s=0}^{r-1}q^{s}{{m-1+s}\brack {s}}_1\\
&=q^m\sum_{s=0}^{r-1}q^{s}{{m-1+s}\brack {m-1}}_1\\
&=q^m{{m+r-1}\brack {m}}_1\\
&=q^{-r}q^{(m+1)-1+r}{{(m+1)-1+r-1}\brack {r-1}}_1\\
&=q^{-r}g_{k,r}(m+1,\overline{r}).
\end{align*}
So, \eqref{recurrence-eqn-1} is verified.

Then, we proceed to show \eqref{recurrence-eqn-2}. Using Lemma \ref{bkrj-lem}, we obtain that  for $m\geq 1$ and $j\geq 1$,
\begin{align*}
&\quad g_{k,r}(m,\overline{k(j-1)+r})+\sum_{s=-k+r+1}^{r}g_{k,r}(m,{kj+s})\nonumber\\
&=q^{m-j+k{j\choose 2}+rj}{{m-j+k(j-1)+r-1}\brack{k(j-1)+r-1}}_1+\nonumber\\
&\qquad +\sum_{s=-k+r+1}^{r}q^{m-(j+1)+k{{j+1}\choose 2}+rj+s}{{m-(j+1)+kj+s-1}\brack{kj+s-1}}_1\nonumber\\
&=q^{m-j+k{j\choose 2}+rj}\left\{{{m-j+k(j-1)+r-1}\brack{k(j-1)+r-1}}_1\right.\nonumber\\
&\qquad+q^{k(j-1)+r}{{m-j+k(j-1)+r-1}\brack{k(j-1)+r}}_1+q^{k(j-1)+r+1}{{m-j+k(j-1)+r}\brack{k(j-1)+r+1}}_1\nonumber\\
&\qquad\qquad\left.+\cdots+q^{kj+r-1}{{m-j+kj+r-2}\brack{kj+r-1}}_1\right\}
\end{align*}

By successively applying the standard recurrence for the $q$-binomial coefficients in \eqref{bin-new-r-1}, we get
\begin{align*}
&\quad{{m-j+k(j-1)+r-1}\brack{k(j-1)+r-1}}_1+q^{k(j-1)+r}{{m-j+k(j-1)+r-1}\brack{k(j-1)+r}}_1\nonumber\\
&\qquad+q^{k(j-1)+r+1}{{m-j+k(j-1)+r}\brack{k(j-1)+r+1}}_1+\cdots+q^{kj+r-1}{{m-j+kj+r-2}\brack{kj+r-1}}_1\nonumber\\
&={{m-j+k(j-1)+r}\brack{k(j-1)+r}}_1+q^{k(j-1)+r+1}{{m-j+k(j-1)+r}\brack{k(j-1)+r+1}}_1\nonumber\\
&\qquad+\cdots+q^{kj+r-1}{{m-j+kj+r-2}\brack{kj+r-1}}_1\nonumber\\
&=\cdots\\
&={{m-j+kj+r-1}\brack{kj+r-1}}_1.
\end{align*}

So, we have
\begin{align*}
&\quad g_{k,r}(m,\overline{k(j-1)+r})+\sum_{s=-k+r+1}^{r}g_{k,r}(m,{kj+s})\nonumber\\
&=q^{m-j+k{j\choose 2}+rj}{{m-j+kj+r-1}\brack{kj+r-1}}_1\nonumber\\
&=q^{-kj-r}q^{(m+1)-(j+1)+k{{j+1}\choose 2}+r(j+1)}{{(m+1)-(j+1)+kj+r-1}\brack{kj+r-1}}_1\nonumber\\
&=q^{-kj-r}g_{k,r}(m+1,\overline{kj+r}).\nonumber
\end{align*}

Hence, we arrive at  \eqref{recurrence-eqn-2}. This completes the proof.
\end{proof}

Then, we give a combinatorial proof of Lemma \ref{recurrence-lem}.

 \begin{proof}[Combinatorial proof of Lemma \ref{recurrence-lem}]  Assume that $\lambda=(\lambda_1,\lambda_2,\ldots,\lambda_{m+1})$ is an overpartition in $\mathcal{B}_{k,r}(m+1)$ with the largest part $\overline{kj+r}$, where $j\geq 0$. Then, we have $\lambda_1=\overline{kj+r}$. We consider the following two cases.

 Case 1: $j=0$. In this case, we have $\lambda_1=\overline{r}$. By the condition (3) in Definition \ref{base-M_kr}, we deduce that $1\leq \lambda_2\leq r$.  If we remove the largest part $\overline{r}$ from $\lambda$, then we can get an overpartition in $\mathcal{B}_{k,r}(m)$ with the largest part not exceeding $r$, and vice versa. This implies that \eqref{recurrence-eqn-1} holds.

 Case 2: $j\geq 1$. It is from the condition (3) in Definition \ref{base-M_kr} that
 $\overline{k(j-1)+r}\leq \lambda_2\leq {kj+r}.$  If we remove the largest part $\overline{kj+r}$ from $\lambda$, then we can get an overpartition in $\mathcal{B}_{k,r}(m)$ with the largest part greater than or equal to $\overline{k(j-1)+r}$ and  not exceeding $kj+r$, and vice versa. So, \eqref{recurrence-eqn-2} is verified.

We conclude that \eqref{recurrence-eqn-1} and \eqref{recurrence-eqn-2} are valid. The proof is complete.
\end{proof}

Combining Lemmas \ref{bkrj-lem} and \ref{recurrence-lem}, we can get the generating function for the overpartitions in $\mathcal{B}_{k,r}(m)$.

\begin{theorem}\label{gen-bkrlo}
For $m\geq 1$, we have
\begin{equation}\label{use}
\sum_{\lambda\in\mathcal{B}_{k,r}(m)}x^{\ell_o(\lambda)}q^{|\lambda|}=\sum_{j\geq 0}x^jq^{m-j+k{j\choose 2}+rj}{{m-j+kj+r-1}\brack{kj+r-1}}_1.
\end{equation}
\end{theorem}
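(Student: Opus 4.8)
The plan is to split the generating function on the left of \eqref{use} according to the largest part of $\lambda$, exactly as was done for \eqref{a-b-g-gen} in Section 2. The crucial observation is that for an overpartition $\lambda\in\mathcal{B}_{k,r}(m)$ the number of overlined parts $\ell_o(\lambda)$ is completely \emph{determined} by the largest part of $\lambda$. Indeed, the case analysis carried out in the proof of Lemma \ref{bkrj-lem} shows that a non-overlined largest part $s$ with $1\le s\le r$ forces $\ell_o(\lambda)=0$; a non-overlined largest part $kj+s$ with $-k+r+1\le s\le r$ and $j\ge1$ forces $\ell_o(\lambda)=j$ (this is the case of index $j+1$ in that lemma); and an overlined largest part $\overline{k(j-1)+r}$ forces $\ell_o(\lambda)=j$. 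Hence grouping the overpartitions by their largest part is the same as grouping them by the value of $\ell_o(\lambda)$, so the weight $x^{\ell_o(\lambda)}$ is constant on each group.

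With this in hand, I would collect, for each fixed $j\ge0$, the contribution of all $\lambda$ with $\ell_o(\lambda)=j$. For $j=0$ this contribution is $\sum_{s=1}^{r}g_{k,r}(m,s)$, which is precisely the left-hand side of \eqref{recurrence-eqn-1}; for $j\ge1$ it is $g_{k,r}(m,\overline{k(j-1)+r})+\sum_{s=-k+r+1}^{r}g_{k,r}(m,kj+s)$, which is exactly the quantity inside the parentheses of \eqref{recurrence-eqn-2}. Thus Lemma \ref{recurrence-lem} collapses each group into the single term $q^{-kj-r}g_{k,r}(m+1,\overline{kj+r})$, a formula that now holds uniformly for all $j\ge0$ (the case $j=0$ reading $q^{-r}g_{k,r}(m+1,\overline{r})$).

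It then remains to substitute the closed form of $g_{k,r}(m+1,\overline{kj+r})=g_{k,r}(m+1,\overline{k(j+1-1)+r})$ supplied by Lemma \ref{bkrj-lem}, to simplify the prefactor using the identity $k{j+1\choose 2}-kj=k{j\choose 2}$, and finally to sum over $j\ge0$ with weight $x^j$; this produces exactly the right-hand side of \eqref{use}. The only genuine content lies in the first step, namely verifying that the largest part pins down $\ell_o(\lambda)$, and this is already delivered by the case analysis of Lemma \ref{bkrj-lem}. Everything afterwards is the routine reindexing and exponent arithmetic described above, so I anticipate no further obstacle.
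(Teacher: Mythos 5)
Your proposal is correct and follows essentially the same route as the paper's own proof: grouping the overpartitions in $\mathcal{B}_{k,r}(m)$ by largest part (equivalently by $\ell_o(\lambda)$, since the largest part determines the number of overlined parts), collapsing each group via Lemma \ref{recurrence-lem} into $q^{-kj-r}g_{k,r}(m+1,\overline{kj+r})$, and then substituting the closed form from Lemma \ref{bkrj-lem} and simplifying the exponents. No gaps; the reindexing and the identity $k{j+1\choose 2}-kj=k{j\choose 2}$ are exactly the arithmetic the paper performs.
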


\begin{proof} Let $\lambda=(\lambda_1,\lambda_2,\ldots,\lambda_{m})$ be an overpartition in $\mathcal{B}_{k,r}(m)$. We consider the number of overlined parts in $\lambda$ based on the largest part of $\lambda$. There are two cases.

 Case 1: $1\leq \lambda_1\leq r$. In this case, there are no overlined parts in $\lambda$, and so $\ell_o(\lambda)=0$.

   Case 2: $\overline{k(j-1)+r}\leq \lambda_1\leq kj+r$, where $j\geq 1$.  In this case, there are $j$ overlined parts in $\lambda$, which are $\overline{k(j-1)+r}$,\ldots, $\overline{k+r}$, $\overline{r}$. So, we have $\ell_o(\lambda)=j$.

   Combining with Lemmas \ref{bkrj-lem} and \ref{recurrence-lem}, we get
   \begin{align*}
   &\quad\sum_{\lambda\in\mathcal{B}_{k,r}(m)}x^{\ell_o(\lambda)}q^{|\lambda|}\\
   &=g_{k,r}(m,1)+g_{k,r}(m,2)+\cdots+g_{k,r}(m,r)\\
   &\quad+\sum_{j\geq1}x^j\left\{g_{k,r}(m,\overline{k(j-1)+r})+g_{k,r}(m,{k(j-1)+r+1})+\cdots+g_{k,r}(m,{kj+r})\right\}\\
   &=q^{-r}g_{k,r}(m+1,\overline{r})+\sum_{j\geq1}x^jq^{-kj-r}g_{k,r}(m,\overline{kj+r})\\
   &=\sum_{j\geq0}x^jq^{-kj-r}g_{k,r}(m+1,\overline{kj+r})\\
   &=\sum_{j\geq0}x^jq^{-kj-r}q^{(m+1)-(j+1)+k{{j+1}\choose 2}+r(j+1)}{{(m+1)-(j+1)+kj+r-1}\brack{kj+r-1}}_1\\
    &=\sum_{j\geq0}x^jq^{m-j+k{{j}\choose 2}+rj}{{m-j+kj+r-1}\brack{kj+r-1}}_1.
   \end{align*}

   This completes the proof.
   \end{proof}

  Now, we are in a position to give a proof of \eqref{gen-m-kr-eqn}.
   \begin{proof}[Proof of \eqref{gen-m-kr-eqn}] By Theorem \ref{m-b-class}, we know that $\mathcal{M}_{k,r}$ is a separable overpartition class with modulus $k$ and $\mathcal{B}_{k,r}$ is the basis of $\mathcal{M}_{k,r}$. Note that $\mathcal{B}_{k,r}(m)$ is the set of overpartitions in $\mathcal{B}_{k,r}$ with  $m$ parts for $m\geq 1$, then by \eqref{useful}, we have

  \begin{equation}\label{M-final}
  \sum_{\pi\in\mathcal{M}_{k,r}}x^{\ell_o(\pi)}z^{\ell(\pi)}q^{|\pi|}=1+\sum_{m\geq 1}\frac{z^m}{(q^k;q^k)_m}\sum_{\lambda\in\mathcal{B}_{k,r}(m)}x^{\ell_o(\lambda)}q^{|\lambda|}.
  \end{equation}

   Substituting \eqref{use} into \eqref{M-final}, we get
  \begin{align*}
   &\quad\sum_{\pi\in\mathcal{M}_{k,r}}x^{\ell_o(\pi)}z^{\ell(\pi)}q^{|\pi|}\\
   &=1+\sum_{m\geq 1}\frac{z^m}{(q^k;q^k)_m}\sum_{j\geq0}x^jq^{m-j+k{{j}\choose 2}+rj}{{m-j+kj+r-1}\brack{kj+r-1}}_1\\
   &=\sum_{m\geq 0}\frac{z^m}{(q^k;q^k)_m}\sum_{j=0}^mx^jq^{m-j+k{{j}\choose 2}+rj}{{m-j+kj+r-1}\brack{kj+r-1}}_1\\
   &=\sum_{j\geq 0}\sum_{m\geq j}\frac{z^m}{(q^k;q^k)_m}x^jq^{m-j+k{{j}\choose 2}+rj}{{m-j+kj+r-1}\brack{kj+r-1}}_1\\
   &=\sum_{j\geq 0}\sum_{n\geq 0}\frac{z^{n+j}}{(q^k;q^k)_{n+j}}x^jq^{n+k{{j}\choose 2}+rj}{{n+kj+r-1}\brack{kj+r-1}}_1.
  \end{align*}
  So, \eqref{gen-m-kr-eqn} is verified. The proof is complete.
  \end{proof}


\bibliographystyle{amsplain}

\end{document}